\theoremstyle{plain}
\newtheorem{thm}{Theorem}[section]
\newtheorem{prop}[thm]{Proposition}
\newtheorem{lem}[thm]{Lemma}
\newtheorem{cor}[thm]{Corollary}
\newtheorem{ques}[thm]{Question}
\theoremstyle{definition}
\newtheorem{dfn}[thm]{Definition}
\newtheorem{rem}[thm]{Remark}
\newtheorem{dfns-rems}[thm]{Definitions and Remarks}
\newtheorem{notas-rems}[thm]{Notations and Remarks}
\newtheorem{exmps-rems}[thm]{Examples and Remarks}
\DeclareMathOperator{\min-match}{min-match}
\DeclareMathOperator{\ind-match}{ind-match}
\DeclareMathOperator{\ord-match}{ord-match}
\begin{document}


\title[Improved bounds for the regularity of powers of edge ideals]{Improved bounds for the regularity of powers of edge ideals of graphs}


\author[S. A. Seyed Fakhari]{S. A. Seyed Fakhari}

\address{S. A. Seyed Fakhari, School of Mathematics, Statistics and Computer Science,
College of Science, University of Tehran, Tehran, Iran, and Institute of Mathematics, Vietnam Academy of Science and Technology, 18 Hoang Quoc Viet, Hanoi, Vietnam.}

\email{aminfakhari@ut.ac.ir}

\author[S. Yassemi]{S. Yassemi}

\address{S. Yassemi, School of Mathematics, Statistics and Computer Science,
College of Science, University of Tehran, Tehran, Iran.}

\email{yassemi@ut.ac.ir}

\urladdr{http://math.ipm.ac.ir/$\sim$yassemi/}


\begin{abstract}
Let $G$ be a graph with edge ideal $I(G)$. We recall the notions of $\min-match_{\{K_2, C_5\}}(G)$ and $\ind-match_{\{K_2, C_5\}}(G)$ from \cite{sy}. We show that $${\rm reg}(I(G)^s)\leq 2s+\min-match_{\{K_2, C_5\}}(G)-1,$$for all $s\geq 1$, which implies that$${\rm reg}(I(G)^s)\leq 2s+\min-match(G)-1.$$Moreover, we show that$${\rm reg}(I(G)^s)\geq 2s+\ind-match_{\{K_2, C_5\}}(G)-2,$$and if $\ind-match_{\{K_2, C_5\}}(G)$ is an odd integer, then$${\rm reg}(I(G)^s)\geq 2s+\ind-match_{\{K_2, C_5\}}(G)-1.$$Furthermore, it is shown that$${\rm reg}(I(G)^s)\leq 2s+\ord-match(G)-1,$$where $\ord-match(G)$ denotes the ordered matching number of $G$. Finally, we construct infinitely many connected graphs which satisfy the following strict inequalities:$$2s+\ind-match(G)-1 < {\rm reg}(I(G)^s)< 2s+{\rm cochord}(G)-1.$$This gives a positive answer to a question asked in \cite{jns}.
\end{abstract}


\subjclass[2000]{Primary: 13D02, 05E99}


\keywords{Castelnuovo--Mumford regularity, Edge ideal, co-chordal cover number, Ordered matching number, Induced matching number}


\thanks{The research of the first author is partially funded by the Simons Foundation Grant Targeted for Institute of Mathematics, Vietnam Academy of Science and Technology.}


\maketitle


\section{Introduction} \label{sec1}

Let $\mathbb{K}$ be a field and $S = \mathbb{K}[x_1,\ldots,x_n]$  be the
polynomial ring in $n$ variables over $\mathbb{K}$. Suppose that $M$ is a graded $S$-module with minimal free resolution
$$0  \longrightarrow \cdots \longrightarrow  \bigoplus_{j}S(-j)^{\beta_{1,j}(M)} \longrightarrow \bigoplus_{j}S(-j)^{\beta_{0,j}(M)}   \longrightarrow  M \longrightarrow 0.$$
The Castelnuovo--Mumford regularity (or simply, regularity) of $M$, denote by ${\rm reg}(M)$, is defined as follows:
$${\rm reg}(M)=\max\{j-i|\ \beta_{i,j}(M)\neq0\}.$$
The regularity of $M$ is an important invariant in commutative algebra and algebraic geometry.

Cutkosky, Herzog, Trung, \cite{cht}, and independently Kodiyalam \cite{k1}, proved that for a homogenous ideal $I$ in a polynomial ring, reg$(I^s)$ is a linear function for $s\gg0$, i.e., there exist integers $a$, $b$, and $s_0$ such that $${\rm reg} (I^s)=as+b \ \ \ \ {\rm for \ all} \ s\geq s_0.$$
It is known that $a$ is bounded above by the maximum degree of elements in a minimal generating set of $I$. But a general bound for $b$ as well as $s_0$ is unknown.

There is a natural correspondence between quadratic squarefree monomial ideals of $S$ and finite simple graphs with $n$ vertices. To every simple graph $G$ with vertex set $V(G)=\{v_1, \ldots, v_n\}$ and edge set $E(G)$, we associate its {\it edge ideal} $I=I(G)$ defined by
$$I(G)=\big(x_ix_j: v_iv_j\in E(G)\big)\subseteq S.$$Computing and finding bounds for the regularity of edge ideals and their powers have been studied by a number of researchers (see for example \cite{ab},  \cite{abs}, \cite{b}, \cite{bbh1}, \cite{bbh}, \cite{bht}, \cite{dhs}, \cite{ha}, \cite{jns}, \cite{js}, \cite{k}, \cite{khm}, \cite{msy} and \cite{wo}).

Katzman \cite{k}, proved that for any graph $G$,
\[
\begin{array}{rl}
{\rm reg}(I(G))\geq \ind-match(G)+1,
\end{array} \tag{$\dagger$} \label{dag}
\]
where $\ind-match(G)$ denotes the induced matching number of $G$. Beyarslan, H${\rm \grave{a}}$ and Trung \cite{bht}, generalized Katzman's inequality by showing that$${\rm reg}(I(G)^s)\geq 2s+\ind-match(G)-1,$$for every integer $s\geq 1$. In 2014, Woodroofe, \cite[Theorem 1]{wo}, determined an upper bound for the regularity of edge ideals. Indeed, he proved ${\rm reg}(I(G))\leq {\rm cochord}(G)+1$, where ${\rm cochord}(G)$ denotes the co-chordal cover number of $G$. Alilooee, Banerjee, Beyarslan and H${\rm \grave{a}}$, \cite[Conjecture 7.11]{bbh1}, conjectured that for every graph $G$ and every integer $s\geq 1$, we have$${\rm reg}(I(G)^s)\leq 2s+{\rm cochord}(G)-1.$$This conjecture has been recently proved by Jayanthan and Selvaraja \cite[Theorem 4.4]{js}. Indeed, they prove the following stronger result to determine upper bounds for the regularity of powers of edge ideals.

\begin{lem} \label{hered}
{\rm (}\cite[Theorem 4.1]{js}{\rm )} Let $G$ be a graph and let $\mathcal{I}_G$ denote the family of induced subgraphs of $G$. Assume that $f:\mathcal{I}_G \rightarrow \mathbb{N}$ is a function which
satisfies the following properties.
\begin{itemize}
\item[(1)] For every graph $G\in \mathcal{I}_G$, we have ${\rm reg}(I(G))\leq f(G)+1$.
\item[(2)] If $H_1$ is an induced subgraph of $H_2$, then $f(H_1)\leq f(H_2)$.
\item[(3)] For any graph $H\in \mathcal{I}_G$ and every edge $e\in E(H)$, we have$$f(H-N_H[e])\leq f(H)-1.$$
\item[(4)] For every induced subgraph $H$ of $G$ with at least one edge, there exists a vertex $w\in V(H)$ such that$$f(G-N_G[w])\leq f(G)-1.$$
\end{itemize}
Then for every integer $s\geq 1$, we have$${\rm reg}(I(G)^s)\leq 2s+f(G)-1.$$
\end{lem}

The inequality ${\rm reg}(I(G)^s)\leq 2s+{\rm cochord}(G)-1$ is proved by the combination of the above Lemma with Lemma \ref{cochord} which was in fact proved in an earlier version of this paper. Because of this reason, we include Lemma \ref{cochord} also in this version and shortly explain how the above mentioned inequality follows from Lemmata \ref{hered} and \ref{cochord} (see Proposition \ref{main1}).

By \cite{wo}, we know that for every graph $G$,
\[
\begin{array}{rl}
{\rm reg}(I(G))\leq \min-match(G)+1,
\end{array}
\]
where $\min-match(G)$ denotes the minimum size of maximal matchings of $G$. This inequality was strengthened by the authors in \cite{sy}. In fact, in \cite{sy}, the authors introduced the notion of $\min-match_{\{K_2, C_5\}}(G)$, which is a lower bound for $\min-match(G)$ (see Definition \ref{newdef}). It is shown in \cite[Theorem 3.8]{sy} that
\[
\begin{array}{rl}
{\rm reg}(I(G))\leq \min-match_{\{K_2, C_5\}}(G)+1.
\end{array}
\]
The above inequality, suggests the following inequality, which is the first main result of this paper, Theorem \ref{regh}.
\[
\begin{array}{rl}
{\rm reg}(I(G)^s)\leq 2s+\min-match_{\{K_2, C_5\}}(G)-1.
\end{array}
\]
We mention that the proof of Theorem \ref{regh} is also based on Lemma \ref{hered}. We remark that recently Banerjee, Beyarslan and H${\rm \grave{a}}$ \cite[Theorem 3.4]{bbh} proved that for every graph $G$ and every integer $s\geq 1$, we have$${\rm reg}(I(G)^s)\leq 2s+{\rm match}(G)-1.$$Since $\min-match_{\{K_2, C_5\}}(G)$ is a lower bound for ${\rm match}(G)$, it follows that Theorem \ref{regh} is an improvement of \cite[Theorem 3.4]{bbh}.

There is another upper bound for the regularity of edge ideals, in terms of the ordered matching number of $G$ (see Definition \ref{om}). More precisely, let $G$ be a graph with ordered matching number $\ord-match(G)$. Constantinescu and Varbaro \cite[Remark 4.8]{cv} prove that ${\rm reg}(I(G))\leq \ord-match(G)+1$ (see also \cite[Corollary 2.5]{s4} for an alternative proof). As a generalization of this inequality, we prove in Theorem \ref{rego} that$${\rm reg}(I(G)^s)\leq 2s+\ord-match(G)-1,$$for every integer $s\geq 1$. Recently, Herzog and Hibi \cite[Theorem 1]{hh} proved that for every graph $G$ and every integer $s\geq 1$, we have $${\rm reg}(I(G)^s)\leq 2s+\alpha(G)-1,$$where $\alpha(G)$ denotes the independence number of $G$, which is the size of the largest independent subset of vertices of $G$. It is obvious from the definition of ordered matching number that this quantity is a lower bound for $\alpha(G)$. Thus, Theorem \ref{rego} is an improvement of \cite[Theorem 1]{hh}.

In Section \ref{sec4}, we determine a lower bound for the regularity of powers edge ideals. As mentioned above, Beyarslan, H${\rm \grave{a}}$ and Trung  proved that for every graph $G$ and every integer $s\geq 1$, we have$${\rm reg}(I(G)^s)\geq 2s+\ind-match(G)-1.$$In \cite{sy}, the authors introduced the notion of $\ind-match_{\{K_2, C_5\}}(G)$ which is an upper bound for $\ind-match(G)$ (see Definition \ref{newdef}). As an strengthen of inequality \ref{dag}, it was shown in \cite[Theorem 3.6]{sy} that for every graph $G$, we have $$\ind-match_{\{K_2, C_5\}}(G)+1\leq {\rm reg}(I(G)).$$This inequality suggests that$$2s+\ind-match_{\{K_2, C_5\}}(G)-1\leq {\rm reg}(I(G)^s) \ \ \ \ {\rm for \ all} \ s\geq 1.$$But the above inequality is not in general true, as the $5$-cycle graph $C_5$ shows. However, we prove in Theorem \ref{low} that for every graph $G$ and every integer $s\geq 1$, we have$$2s+\ind-match_{\{K_2, C_5\}}(G)-2\leq {\rm reg}(I(G)^s)$$and if $\ind-match_{\{K_2, C_5\}}(G)$ is an odd integer, then$$2s+\ind-match_{\{K_2, C_5\}}(G)-1\leq {\rm reg}(I(G)^s).$$

In Section \ref{sec5}, we investigate a question raised by Jayanthan, Narayanan and Selvaraja \cite[Question 5.8]{jns}. In fact, they asked wether there exists a graph $G$ with$$2s+ \ind-match(G)-1< {\rm reg}(I(G)^s)< 2s+{\rm cochord}(G)-1 \ \ \ \ {\rm for \ all} \ s\gg 0.$$Recently, Jayanthan and Selvaraja \cite{js} constructed a family of disconnected graphs which satisfy these inequalities for any $s\geq1$. In Section \ref{sec5}, we present infinitely many connected graphs for which the above strict inequalities hold true for every $s\geq 1$.


\section{Preliminaries} \label{sec2}

In this section, we provide the definitions and basic facts which will be used in the next sections.

Let $G$ be a simple graph with vertex set $V(G)=\big\{v_1, \ldots,
v_n\big\}$ and edge set $E(G)$. For a vertex $v_i$, the {\it neighbor set} of $v_i$ is $N_G(v_i)=\{v_j\mid v_iv_j\in E(G)\}$ and we set $N_G[v_i]=N_G(v_i)\cup \{v_i\}$ and call it the {\it closed neighborhood} of $v_i$. The cardinality of $N_G(v_i)$ is called the {\it degree} of $v_i$. For an edge $e=v_iv_j$ of $G$, we set $N_G[e]=N_G[v_i]\cup N_G[v_j]$. For every subset $U\subset V(G)$, the graph $G-U$ has vertex set $V(G-U)=V(G)\setminus U$ and edge set $E(G-U)=\{e\in E(G)\mid e\cap U=\emptyset\}$. A subgraph $H$ of $G$ is called {\it induced} provided that two vertices of $H$ are adjacent if and only if they are adjacent in $G$. The induced subgraph of $G$ on the vertex set $U\subseteq V(G)$ will be denoted by $G_U$. We recall that for a graph $G$, its {\it complementary graph}
$\overline{G}$ is the graph with $V(\overline{G})=V(G)$ and $E(\overline{G})$
consists of those $2$-element subsets $v_iv_j$ of $V(G)$ for which
$v_i,v_j\notin E(G)$. The complete graph with $n$ vertices will be denoted by $K_n$. A cycle graph with $n$ vertices is called an {\it $n$-cycle} graph an is denoted by $C_n$. A graph $G$ is called {\it chordal} if it has no induced cycle of length at least four. $G$ is said to be {\it co-chordal} if its complementary graph $\overline{G}$ is chordal. The minimum number of co-chordal subgraphs of $G$ which are needed to cover all edges of $G$ is called the {\it co-chordal cover number} of $G$ and is denoted by ${\rm cochord}(G)$. A subset $W$ of $V(G)$ is a {\it clique} of $G$, if every two distinct vertices of $W$ are adjacent in $G$. A vertex $v$ of $G$ is a {\it simplicial vertex} if $N_G(v)$ is a clique. It is well-known that every chordal graph has a simplicial vertex. The {\it girth} of $G$  is the length of the shortest cycle in $G$. A subset $A$ of $V(G)$ is called an {\it independent subset} of $G$ if there are no edges among the vertices of $A$. The cardinality of the largest independent subset of vertices of $G$ is called the {\it independence number} of $G$. Adding a {\it whisker} to $G$ at a vertex $v_i$ means adding a new vertex $u$ and the edge $uv_i$ to $G$. The graph which is obtained from $G$ by adding a whisker to all of its vertices is denoted by $W(G)$.

\begin{dfn}
A graph $G$ is called {\it vertex decomposable} if either it is an empty graph, or it has a vertex $v$ which satisfies the following conditions.
\begin{itemize}
\item[(i)] The graphs $G-v$ and $G-N_G[v]$ are vertex decomposable.
\item[(ii)] Every maximal independent subset of $G-v$ is a maximal independent set of $G$.
\end{itemize}
\end{dfn}

Let $G$ be a graph. A $5$-cycle of $G$ is said to be {\it basic} if it does not contain two adjacent vertices
of degree three or more in $G$. An edge of $G$ which is incident to a vertex of degree $1$ is called a {\it pendant} edge.
Let $C(G)$ denote the set of all vertices which belong to basic $5$-cycles and let $P(G)$
denote the set of vertices which are incident to pendant edges of $G$.

\begin{dfn} \label{defpc}
A graph $G$ is said to belong to the class $\mathcal{PC}$ if
\begin{itemize}
\item[(1)] $V(G)$ can be partitioned as $V(G)=P(G)\cup C(G)$, and
\item[(2)] the pendant edges form a perfect matching for the induced subgraph of $G$ on $P(G)$, and
\item[(3)] the vertices of basic $5$-cycles form a partition of $C(G)$.
\end{itemize}
\end{dfn}

By \cite[Theorem 2.4]{hmt}, a connected graph of girth at least $5$, belonging to the class $\mathcal{PC}$ is vertex decomposable.

Let $G$ be a graph. A subset $M\subseteq E(G)$ is a {\it matching} if $e\cap e'=\emptyset$, for every pair of edges $e, e'\in M$. The cardinality of the largest matching of $G$ is called the {\it matching number} of $G$ and is denoted by ${\rm match}(G)$. The minimum cardinality of the maximal matchings of $G$ is the {\it minimum matching number} of $G$ and is denoted by $\min-match(G)$. A matching $M$ of $G$ is an {\it induced matching} of $G$ if for every pair of edges $e, e'\in M$, there is no edge $f\in E(G)\setminus M$ with $f\subset e\cup e'$. An induced matching of size two is called a {\it gap}. It is clear that if $G$ has a gap, then its complementary graph $\overline{G}$ contains a $4$-cycle graph $C_4$ and hence, $G$ is not a co-chordal graph. The cardinality of the largest induced matching of $G$ is called the {\it induced  matching number} of $G$ and is denoted by $\ind-match(G)$.

We next recall the notions of $\ind-match_{\mathcal{H}}(G)$ and $\ind-match_{\mathcal{H}}(G)$ from \cite{sy}.

\begin{dfn} \label{newdef}
Let $G$ be a graph with at least one edge and let $\mathcal{H}$ be a collection of connected graphs with $K_2\in \mathcal{H}$. We say that a subgraph $H$ of $G$, is an {\it $\mathcal{H}$--subgraph} if every connected component of $H$ belongs to $\mathcal{H}$. If moreover $H$ is an induced subgraph of $G$, then we say that it is an {\it induced $\mathcal{H}$--subgraph} of $G$. Since $K_2\in \mathcal{H}$, every graph with at least one edge has an induced $\mathcal{H}$--subgraph. An $\mathcal{H}$--subgraph $H$ of $G$ is called {\it maximal} if $G\setminus V(H)$ has no $\mathcal{H}$--subgraph. We set$$\ind-match_{\mathcal{H}}(G):={\rm max} \big\{{\rm match}(H)\mid H \ {\rm is \ an \ induced} \ \mathcal{H}\textendash {\rm subgraph \ of} \ G \big\},$$and$$\min-match_{\mathcal{H}}(G):={\rm min} \big\{{\rm match}(H)\mid H \ {\rm is \ a \ maximal} \ \mathcal{H}\textendash {\rm subgraph \ of} \ G \big\},$$and call them the {\it induced $\mathcal{H}$--matching number} and the {\it minimum $\mathcal{H}$--matching number} of $G$, respectively. We set $\ind-match_{\mathcal{H}}(G)=\min-match_{\mathcal{H}}(G)=0$, when $G$ has no edge.
\end{dfn}

Of particular interest is the case $\mathcal{H}=\{K_2, C_5\}$. Indeed, we know from \cite[Corollary 3.9]{sy} that for every graph $G$ with edge ideal $I(G)$, we have$$\ind-match_{\{K_2, C_5\}}(G)+1\leq{\rm reg}(I(G))\leq \min-match_{\{K_2, C_5\}}(G)+1.$$Note that for every graph $G$, the quantity $\ind-match_{\{K_2, C_5\}}(G)$ is an upper bound for $\ind-match(G)$ and $\min-match_{\{K_2, C_5\}}(G)$ is a lower bound for $\min-match(G)$.

We close this section by the definition of ordered matching number. 

\begin{dfn} \label{om}
Let $G$ be a graph, and let $M=\big\{\{a_i,b_i\}\mid 1\leq i\leq r\big\}$ be a
nonempty matching of $G$. We say that $M$ is an {\it ordered matching} of
$G$ if the following hold:
\begin{itemize}
\item[(1)] $A:=\{a_1,\ldots,a_r\} \subseteq V(G)$ is a set of
    independent vertices of $G$; and

\item[(2)] $\{a_i, b_j\}\in E(G)$ implies that $i\leq j$.
\end{itemize}
The {\it ordered matching number} of $G$, denoted by $\ord-match(G)$, is
defined to be $$\ord-match(G)=\max\{|M|\mid M\subseteq E(G)\ {\rm is\ an\
ordered\ matching\ of} \ G\}.$$
\end{dfn}


\section{Upper Bounds} \label{sec3}

In this section, we determine two upper bounds for the regularity of powers of edge ideals, Theorems \ref{regh} and \ref{rego}. Before focusing on our main results, we first prove Lemma \ref{cochord}, which is a part of the proof of Jayanthan and Selvaraja \cite[Theorem 4.4]{js} for the inequality$${\rm reg}(I(G)^s)\leq 2s+{\rm cochord}(G)-1.$$Next, in Proposition \ref{main1}, we shortly explain the proof of the above inequality, using Lemmata \ref{hered} and \ref{cochord}.

\begin{lem} \label{cochord}
Let $G$ be a graph with at least one edge. Then there is a vertex $w\in V(G)$ such that$${\rm cochord}(G-N_G[w])\leq {\rm cochord}(G)-1.$$
\end{lem}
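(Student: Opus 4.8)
The plan is to exhibit a single vertex $w$ whose closed-neighborhood deletion removes an entire co-chordal subgraph from an optimal co-chordal cover of $G$. Fix a co-chordal cover $G = H_1 \cup \cdots \cup H_k$ of $G$ with $k = {\rm cochord}(G)$; here each $H_i$ is a co-chordal subgraph on $V(G)$ and every edge of $G$ lies in some $H_i$. Since $G$ has at least one edge, $k \geq 1$. The idea is that $\overline{H_k}$ is chordal, hence has a simplicial vertex; I want to use such a vertex (or a vertex derived from it) as $w$, so that in $G - N_G[w]$ the subgraph $H_k$ contributes no edges, leaving $H_1, \ldots, H_{k-1}$ as a co-chordal cover of $G - N_G[w]$.

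First I would pass to $\overline{H_k}$, which is a chordal graph on $V(G)$, and pick a simplicial vertex $w$ of $\overline{H_k}$, so that $N_{\overline{H_k}}(w)$ is a clique in $\overline{H_k}$. Translating back to $H_k$: the non-neighbors of $w$ in $H_k$ (other than $w$ itself) form an independent set of $H_k$, equivalently $N_{\overline{H_k}}(w) \cup \{w\}$ is an independent set of $H_k$ — wait, more precisely, $w$ together with its $\overline{H_k}$-neighbors induces, in $H_k$, a graph with no edges incident to $w$ is too weak; the correct statement is that $V(G) \setminus N_{H_k}[w]$ is a clique in $\overline{H_k}$, hence an independent set in $H_k$. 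Thus every edge of $H_k$ meets $N_{H_k}[w] \subseteq N_G[w]$. Consequently $H_k$ has no edge in $G - N_G[w]$, so $E(G - N_G[w]) \subseteq \bigcup_{i=1}^{k-1} E(H_i)$.

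Next I would check that $(H_i)_{V(G) \setminus N_G[w]}$, the induced subgraph of $H_i$ on the remaining vertices, is still co-chordal for each $i \leq k-1$: this holds because induced subgraphs of co-chordal graphs are co-chordal (passing to complements, induced subgraphs of chordal graphs are chordal, since an induced cycle in an induced subgraph is an induced cycle in the whole graph). Since each edge of $G - N_G[w]$ lies in one of these $k-1$ induced co-chordal subgraphs, they form a co-chordal cover of $G - N_G[w]$, giving ${\rm cochord}(G - N_G[w]) \leq k - 1 = {\rm cochord}(G) - 1$, as desired.

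The main obstacle to watch is the bookkeeping around simplicial vertices and complementation: a simplicial vertex $w$ of the chordal graph $\overline{H_k}$ has $N_{\overline{H_k}}(w)$ a clique, but what I actually need is that $V(G) \setminus N_G[w]$ (not $N_{\overline{H_k}}(w)$) is independent in $H_k$, and these differ because $N_{H_k}[w]$ may be strictly smaller than $N_G[w]$. Since $N_{H_k}[w] \subseteq N_G[w]$, removing $N_G[w]$ only removes more vertices, so if $V(G) \setminus N_{H_k}[w]$ is already an independent set of $H_k$ then so is its subset $V(G) \setminus N_G[w]$; hence this direction causes no trouble. One should also confirm the degenerate case $k = 1$: then $G$ itself is co-chordal, $w$ is chosen so that $G - N_G[w]$ has no edges, and ${\rm cochord}(G - N_G[w]) = 0 = k-1$, consistent with the convention that the empty graph has co-chordal cover number $0$.
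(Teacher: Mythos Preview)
Your argument is correct and is essentially the paper's own proof: you pick a simplicial vertex $w$ of the complement of one member of an optimal co-chordal cover, observe that this forces that member to lose all its edges upon deleting $N_G[w]$, and conclude that the remaining $k-1$ (induced, hence still co-chordal) subgraphs cover $G-N_G[w]$. The only cosmetic differences are that the paper works with $G_1$ rather than $H_k$ and does not assume the $G_i$ are spanning, instead writing $W_i = V(G_i)\cap N_G[w]$; your observation that $N_{H_k}[w]\subseteq N_G[w]$ handles exactly the same bookkeeping.
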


\begin{proof}
Assume that ${\rm cochord}(G)=t$ and let $G_1, \ldots, G_t$ be the co-chordal subgraphs of $G$ with $E(G)=\bigcup_{i=1}^tE(G_i)$. As $G$ has at least one edge, we conclude that $t\geq 1$. Suppose that $w$ is simplicial vertex of $\overline{G_1}$. Assume that $N_{\overline{G_1}}(w)=\{w_1, \ldots, w_s\}$. Since $w_1, \ldots, w_s$ form a clique in $\overline{G_1}$, it follows that they are independent vertices of $G_1$. Notice that $V(G_1-N_{G_1}[w])=\{w_1, \ldots, w_s\}$, which means that $G_1-N_{G_1}[w]$ consists of isolated vertices. For every $1\leq i\leq t$, set $W_i=V(G_i)\cap N_G[w]$. Since $G_1-W_1$ is a subgraph of $G_1-N_{G_1}[w]$, we conclude that $G_1-W_1$ has no edge. Thus,$$E(G-N_G[w])=\bigcup_{i=1}^tE(G_i-W_i)=\bigcup_{i=2}^tE(G_i-W_i).$$Note that for every integer $i$ with $2\leq i\leq t$, the graph $G_i-W_i$ is a co-chordal graph. Hence, ${\rm cochord}(G-N_G[w])\leq t-1$.
\end{proof}

As we mentioned above, Lemma \ref{cochord} together with Lemma \ref{hered} provides an upper bound for the regularity of powers of edge ideals in terms of the cochordal cover number.

\begin{prop} \label{main1}
{\rm (}\cite[Theorem 4.4]{js}{\rm )} For every graph $G$ and every integer $s\geq 1$, we have$${\rm reg}(I(G)^s)\leq 2s+{\rm cochord}(G)-1.$$
\end{prop}

\begin{proof}
For every graph $G$, we set $f(G)={\rm cochord}(G)$. We know from \cite[Theorem 1]{wo} that ${\rm reg}(I(G))\leq f(G)+1$. On the other hand, it follows from Lemma \ref{cochord} that for every graph $G$, there exists a vertex $w\in V(G)$ with $f(G-N_G[w])\leq f(G)-1$. Obviously, for any induced subgraph $H$ of $G$ we have $f(H)\leq f(G)$. Also, for any edge $e$ of $G$, it is clear that the disjoint union of $e$ and $G-N_G[e]$ is an induced subgraph of $G$. This implies that $f(G-N_G[e])\leq f(G)-1$. Hence, Lemma \ref{hered} implies that$${\rm reg}(I(G)^s)\leq 2s+f(G)-1=2s+{\rm cochord}(G)-1.$$
\end{proof}

Now, we start the proof of the first main result of this paper, Theorem \ref{regh}, which states that for every graph $G$ and every integer $s\geq 1$, the inequality 
\[
\begin{array}{rl}
{\rm reg}(I(G)^s)\leq 2s+\min-match_{\{K_2, C_5\}}(G)-1
\end{array}
\]
holds. The proof of the above inequality is based on Lemma \ref{hered}. In the following three lemmas, we verify the assumptions of Lemma \ref{hered}.

\begin{lem} \label{minmatch}
Let $G$ be a graph with at least one edge. Then there is a vertex $w\in V(G)$ such that$$\min-match(G-N_G[w])\leq \min-match(G)-1.$$
\end{lem}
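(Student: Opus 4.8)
The plan is to find a vertex $w$ whose closed neighborhood can be deleted in a way that "uses up" one edge of a minimum maximal matching while keeping the rest of that matching intact and still maximal in the smaller graph. Concretely, fix a maximal matching $M$ of $G$ of smallest cardinality, so that $|M| = \min\textnormal{-match}(G)$; say $M = \{e_1, \ldots, e_m\}$ with $e_1 = uv$. The natural candidate is $w = u$. Then $M' := M \setminus \{e_1\}$ is a set of edges of $G - N_G[u]$ (we must check each $e_i$, $i\geq 2$, avoids $N_G[u]$ — this is the point that needs care and is addressed below), it is a matching there of size $m-1$, and if it is maximal in $G - N_G[u]$ we are done. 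So the crux is maximality of $M'$ in $G - N_G[u]$, equivalently: every edge of $G - N_G[u]$ meets some $e_i$ with $i\geq 2$.

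The main obstacle is exactly that $M' = M\setminus\{e_1\}$ need not be maximal in $G-N_G[u]$: there could be an edge $f$ of $G$ disjoint from $N_G[u]$ that, in $G$, was only "blocked" from extending $M$ by $e_1$ — but $f\cap e_1 \subseteq f \cap N_G[u] = \emptyset$ forces $f$ to meet $e_1$, a contradiction, so in fact no such $f$ exists once $f$ avoids $N_G[u]$. Wait — more carefully: every edge $f \in E(G-N_G[u])$ satisfies $f \cap V(M) \neq \emptyset$ because $M$ is maximal in $G$; and $f \cap e_1 \subseteq f \cap N_G[u] = \emptyset$, so $f$ must meet $\bigcup_{i\geq 2} e_i = V(M')$. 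Hence $M'$ is maximal in $G - N_G[u]$. The remaining worry is whether some $e_i$ with $i \geq 2$ might itself meet $N_G[u]$ — it cannot meet $u$ (disjoint from $e_1$) but it could meet a neighbor $x$ of $u$. If that happens, pick instead $w$ to be an endpoint of such an $e_i$ carefully, or — cleaner — observe we only need $\min\textnormal{-match}(G - N_G[w]) \le \min\textnormal{-match}(G) - 1$, so it suffices to produce \emph{some} maximal matching of $G - N_G[w]$ of size at most $m - 1$; the set of those $e_i$ ($i \ge 2$) that survive in $G-N_G[u]$ can be extended to a maximal matching of $G-N_G[u]$, and I will bound its size by $m-1$ using the fact that each removed $e_i$ was "charged" to a neighbor of $u$, together with a short counting argument, or simply by choosing $w$ more cleverly.

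I would therefore organize the argument as: (i) fix a minimum maximal matching $M$ and an edge $e_1 = uv \in M$; (ii) set $w = u$ and let $M' = M \setminus \{e_1\}$, noting $M' \subseteq E(G - N_G[w])$ provided no $e_i$ ($i\geq 2$) touches $N_G(u)$; (iii) verify $M'$ is maximal in $G - N_G[w]$ via the displayed observation that any edge there meets $V(M)$ but not $e_1$; (iv) conclude $\min\textnormal{-match}(G - N_G[w]) \le |M'| = m - 1$. The one genuinely delicate point is step (ii): if some $e_i$ with $i \ge 2$ does meet $N_G(u)$, I would remove all such edges from $M$ to get $M''$, extend $M''$ to a maximal matching $\widetilde{M}$ of $G - N_G[u]$, and check $|\widetilde{M}| \le m - 1$ — for this it is enough that at least one edge of $M$ (namely $e_1$) was discarded and that extending $M''$ back inside $G$ could not require more than $|M|$ edges total, which follows because $\widetilde M$ together with $e_1$ is a matching of $G$ and $M$ is a \emph{minimum} maximal one while $\widetilde M \cup \{e_1\}$ is contained in a maximal matching of $G$ of size $\ge m$, giving $|\widetilde M| + 1 \le$ (size of that maximal matching), which is not immediately $\le m$. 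I expect the clean fix is to instead choose $w$ to be a vertex of $G$ covered by $M$ but with the additional property that $N_G[w]$ meets $M$ in as few edges as possible, or to invoke a small case analysis; pinning down this choice of $w$ is where the real work lies, and I would devote the bulk of the write-up to it.
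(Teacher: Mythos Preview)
Your outline is on the right track and in fact matches the paper's setup exactly: fix a minimum maximal matching $M=\{e_1,\dots,e_m\}$, pick $w$ to be an endpoint of one edge of $M$, and try to bound the size of a maximal matching of $G-N_G[w]$ built from the surviving edges of $M$. Your verification that, in the easy case where no $e_i$ ($i\geq 2$) meets $N_G(w)$, the set $M\setminus\{e_1\}$ is already maximal in $G-N_G[w]$ is correct.

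The gap is precisely where you flag it: in the general case you form $M''$ (the edges of $M$ disjoint from $N_G[w]$), extend it to a maximal matching $\widetilde{M}$ of $G-N_G[w]$, and need $|\widetilde{M}|\leq m-1$. Your attempt to extract this from the \emph{minimality} of $M$ fails, as you note, and neither of your proposed fixes (choosing $w$ so that $N_G[w]$ meets $M$ in few edges, or a case analysis) is the right move. No clever choice of $w$ is needed; any endpoint of any edge of $M$ works.

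The missing idea is a direct count of how large the extension $\widetilde{M}\setminus M''$ can be. Partition $M$ according to how many endpoints lie in $N_G[w]$: say $e_1,\dots,e_p$ avoid $N_G[w]$ entirely, $e_{p+1},\dots,e_{p+q}$ have exactly one endpoint $v^i\notin N_G[w]$, and the remaining edges (including the edge containing $w$) lie inside $N_G[w]$; thus $p+q\leq m-1$. Now $M''=\{e_1,\dots,e_p\}$, and every edge of $\widetilde{M}\setminus M''$ lives on the vertex set
\[
U':=\big(V(G-N_G[w])\setminus V(M)\big)\ \cup\ \{v^{p+1},\dots,v^{p+q}\}.
\]
Because $M$ is maximal in $G$, the set $V(G)\setminus V(M)$ is independent, so every edge of $G$ on $U'$ must contain some $v^i$. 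Hence any matching on $U'$ has size at most $q$, giving $|\widetilde{M}|\leq p+q\leq m-1$. This is exactly the argument in the paper, and it replaces your appeal to minimality with a one-line bound on the matching number of the induced subgraph on $U'$.
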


\begin{proof}
Suppose that $\min-match(G)=t$ and consider a maximal matching $\{e_1, \ldots, e_t\}$ of $G$. Let $w$ be a vertex of $e_t$. Without loss of generality, we assume there exist nonnegative integers $p$ and $q$ such that
\begin{itemize}
\item[(i)] for every integer $i$ with $1\leq i\leq p$, the edge $e_i$ is not incident to any vertex in $N_G[w]$;
\item[(ii)] for every integer $i$ with $p+1\leq i\leq p+q$, the edge $e_i$ is incident to exactly one vertex in $N_G[w]$;
\item[(iii)] for every integer $i$ with $p+q+1\leq i\leq t$, the both vertices of $e_i$ belong to $N_G[w]$.
\end{itemize}
As the vertices of $e_t$ belong to $N_G[w]$, we conclude that $p+q<t$.

For every integer $i$ with $p+1\leq i\leq p+q$, let $v^i$ be the vertex of $e_i$ which does not belong to $N_G[w]$. Assume that $U=\{u_1, \ldots, u_m\}$ is the set of vertices of $G$ which are not incident to $e_1, \ldots, e_t$ and set$$U'=\big(V(G-N_G[w])\cap U\big)\cup\{v^{p+1}, \ldots, v^{p+q}\}.$$As $\{e_1, \ldots, e_t\}$ is a maximal matching of $G$, we conclude that $U$ is an independent subset of vertices of $G$. Thus, every edge of the induced subgraph $(G-N_G[w])_{U'}$ is adjacent to at least one of the vertices $v^{p+1}, \ldots, v^{p+q}$. This means that$${\rm match}((G-N_G[w])_{U'})\leq q.$$ Let $e'_1, \ldots, e'_r$ be a maximal matching of $(G-N_G[w])_{U'})$. In particular, $r\leq q$. Note that $\{e_1, \ldots, e_p, e'_1, \ldots, e'_r\}$ is a maximal matching of $G-N_G[w]$. Thus,$$\min-match(G-N_G[w])\leq p+r\leq p+q<t,$$as required.
\end{proof}

\begin{lem} \label{minh}
Let $G$ be a graph with at least one edge. Then there is a vertex $w\in V(G)$ such that$$\min-match_{\{K_2, C_5\}}(G-N_G[w])\leq \min-match_{\{K_2, C_5\}}(G)-1.$$
\end{lem}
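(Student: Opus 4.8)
The plan is to mimic the structure of Lemma~\ref{minmatch}, but now carrying along the $\{K_2,C_5\}$-structure rather than just a matching. Fix a maximal $\{K_2,C_5\}$-subgraph $H$ of $G$ realizing $\min\text{-}\mathrm{match}_{\{K_2,C_5\}}(G)=t$, so $\mathrm{match}(H)=t$ and $G-V(H)$ has no edge. Write the connected components of $H$ as $K_2$'s and $C_5$'s; note each $C_5$ contributes $2$ to $\mathrm{match}(H)$ and each $K_2$ contributes $1$. Pick a component $H_0$ of $H$ containing at least one edge, and let $w$ be a vertex of $H_0$. The idea is that deleting $N_G[w]$ kills the matching contribution coming from the components of $H$ that meet $N_G[w]$ (and certainly $H_0$ meets it, since $w\in H_0$), while the vertices of the surviving components of $H$, together with the external independent set $V(G)\setminus V(H)$, form a set on which we can only build a small $\{K_2,C_5\}$-subgraph.

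Concretely, partition the components of $H$ into three groups as in Lemma~\ref{minmatch}: those $H_i$ with $V(H_i)\cap N_G[w]=\emptyset$ (these survive intact in $G-N_G[w]$ and remain a $\{K_2,C_5\}$-subgraph there), those $H_i$ with $V(H_i)\cap N_G[w]\neq\emptyset$ but $V(H_i)\not\subseteq N_G[w]$ (partially deleted), and those with $V(H_i)\subseteq N_G[w]$ (completely deleted). Let $P$ be the union of the vertices of the untouched components and let $Q$ be the set of leftover vertices in $G-N_G[w]$: namely $\big(V(G)\setminus V(H)\big)\cap V(G-N_G[w])$ together with the vertices of the partially-deleted components that lie outside $N_G[w]$. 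Then $V(G-N_G[w]) = P \sqcup Q \sqcup (\text{vertices of untouched }C_5\text{'s already in }P)$ — more carefully, $V(G-N_G[w])=P\cup Q$. The matching contribution of the untouched part is some number $t' \le t - \mathrm{match}(H_0) \le t-1$, since $H_0$ is removed. Now extend the untouched $\{K_2,C_5\}$-subgraph on $P$ to a \emph{maximal} $\{K_2,C_5\}$-subgraph $H'$ of $G-N_G[w]$; the new components of $H'$ live inside $Q$ (up to also absorbing some $P$-vertices, which one must handle, but the untouched components are already maximal among themselves so the genuinely new part sits in $Q$). The crucial count is $\mathrm{match}(H')\le t'+\mathrm{match}\big((G-N_G[w])_{Q}\big)$, and one argues $\mathrm{match}\big((G-N_G[w])_{Q}\big)$ is small enough that the total stays $\le t-1$. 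Since $\min\text{-}\mathrm{match}_{\{K_2,C_5\}}(G-N_G[w])\le \mathrm{match}(H')$, this gives the claim.

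The main obstacle is the last count: in Lemma~\ref{minmatch} the analogous set $U'$ spanned a graph of matching number $\le q$ because every edge there was incident to one of the $q$ special vertices $v^{p+1},\dots,v^{p+q}$, so $q$ components of the original matching were traded for at most $q$ new edges — a break-even. Here the accounting is subtler because a $C_5$ component of $H$ contributes $2$ to the matching but occupies $5$ vertices, only some of which may survive, and conversely a newly created $C_5$ in $H'$ contributes $2$. One must show that the per-vertex or per-component bookkeeping still closes: roughly, a component $H_i$ of $H$ that is only partially deleted leaves behind at most $4$ vertices (for a $C_5$) or $1$ vertex (for a $K_2$) outside $N_G[w]$, and these, being attached to the now-deleted part, behave like the $v^i$'s — each such leftover fragment can seed at most as much new matching as $H_i$ contributed. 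Making this precise will require a small case analysis on how a basic/non-basic $C_5$ of $H$ can intersect $N_G[w]$, but it is structurally the same trade-off as in Lemma~\ref{minmatch}. Once the inequality $\mathrm{match}(H')\le t-1$ is established, the proof concludes exactly as in Lemma~\ref{minmatch}.

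One technical point worth flagging: we need $K_2\in\{K_2,C_5\}$ (which holds) to guarantee that extending $H'$ to a maximal $\{K_2,C_5\}$-subgraph is always possible, and we should choose the component $H_0$ (equivalently the vertex $w$) to contain an edge so that $\mathrm{match}(H_0)\ge 1$; any component of a $\{K_2,C_5\}$-subgraph with at least one vertex that actually appears in $H$ has an edge, so this is automatic as long as $H$ itself has an edge, which it does since $G$ does.
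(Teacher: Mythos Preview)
Your overall plan coincides with the paper's: choose $w$ inside a component of a minimum maximal $\{K_2,C_5\}$-subgraph $H$, keep the untouched components, and rebuild a maximal $\{K_2,C_5\}$-subgraph of $G-N_G[w]$ on the leftovers. The gap is in the step you yourself flag as ``the main obstacle''. Your route is to bound $\mathrm{match}(H')\le t'+\mathrm{match}\big((G-N_G[w])_{Q}\big)$ and then argue that the right-hand side is at most $t-1$ because ``each leftover fragment can seed at most as much new matching as $H_i$ contributed''. Read as a bound on $\mathrm{match}\big((G-N_G[w])_{Q}\big)$, that claim is false. Take $G$ to be the $5$-cycle on $\{1,2,3,4,5\}$ together with pendant edges from $2,3,4,5,1$ to new vertices $6,7,8,9,10$ respectively. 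The $5$-cycle is a maximal $\{K_2,C_5\}$-subgraph with $t=2$. With $w=1$ there are no untouched components, so $t'=0$ and $Q=V(G-N_G[w])=\{3,4,6,7,8,9\}$; but $\{\,\{3,7\},\{4,8\}\,\}$ is a matching in $(G-N_G[w])_{Q}$, so $\mathrm{match}\big((G-N_G[w])_{Q}\big)=2$ and your inequality yields only $\mathrm{match}(H')\le 2$, not $\le 1$. The point is that surviving cycle vertices can each match out to a distinct external vertex, so a touched $C_5$ leaving four vertices can contribute $4$, not $2$, to $\mathrm{match}(Q)$.

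The paper repairs exactly this by not throwing all cycle leftovers into the wild set. For each touched $C_5$ component $G_i$ it first takes a maximum matching $M_i$ of the surviving piece $H_i=G_i-N_G[w]$ and lets $L_i$ be the set of vertices of $H_i$ missed by $M_i$; a direct check on induced subgraphs of $C_5$ with at most four vertices gives $|M_i|+|L_i|\le 2$, and for the cycle $G_s$ containing $w$ (the paper chooses $w$ on a $C_5$ whenever one is present, reducing the pure-$K_2$ case to Lemma~\ref{minmatch}) one gets the sharper $|M_s|+|L_s|\le 1$. Only the $L_i$'s, together with the single survivors $v^j$ of half-touched $K_2$'s and the external vertices, are placed into the set $U'$. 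Since the external vertices are independent, every edge of $(G-N_G[w])_{U'}$ meets some $L_i$ or some $v^j$, so a maximal matching there has size $r\le q_2+\sum_i|L_i|$. The explicitly constructed maximal $\{K_2,C_5\}$-subgraph of $G-N_G[w]$ consists of the untouched components, the edges in $\bigcup_i M_i$, and this matching on $U'$; its matching number is at most
\[
q_1+2p_1+\sum_i|M_i|+r\ \le\ q_1+q_2+2p_1+\sum_i\big(|M_i|+|L_i|\big)\ \le\ m+2s-1\ =\ t-1.
\]
So the missing ingredient in your sketch is this two-layer treatment of each touched $C_5$: first an internal matching $M_i$, then only the residue $L_i$ is released into the common pool, with the key arithmetic $|M_i|+|L_i|\le 2$ (and $\le 1$ for the component containing $w$). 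Once you insert this, your outline becomes the paper's proof.
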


\begin{proof}
Assume that $\min-match_{\{K_2, C_5\}}(G)=t$ and let $H$ be a maximal $\{K_2, C_5\}$-subgraph of $G$ with ${\rm match}(H)=t$. Suppose that $\{e_1, \ldots, e_m, G_1, \ldots, G_s\}$ is the set of connected components of $H$, where $e_1, \ldots, e_m$ are isomorphic to $K_2$ and $G_1, \ldots, G_s$ are $5$-cycles. Thus, $m+2s=t$. if $s=0$, then$$\min-match_{\{K_2, C_5\}}(G)=\min-match(G)$$and it follows from Lemma \ref{minmatch} that there exists a vertex $w\in V(G)$ with $$\min-match_{\{K_2, C_5\}}(G-N_G[w])\leq \min-match(G-N_G[w])$$$$\leq \min-match(G)-1=\min-match_{\{K_2, C_5\}}(G)-1.$$Thus, assume that $s\geq 1$. Let $w$ be a vertex of $G_s$. Without loss of generality, we suppose there exist nonnegative integers $p_1, q_1, q_2$ such that
\begin{itemize}
\item[(i)] for every integer $i$ with $1\leq i\leq p_1$, the cycle $G_i$ has no vertex in $N_G[w]$;
\item[(ii)] for every integer $i$ with $p_1+1\leq i\leq s$, the cycle $G_i$ has at least one vertex in $N_G[w]$;
\item[(iii)] for every integer $i$ with $1\leq i\leq q_1$, the edge $e_i$ is not incident to any vertex in $N_G[w]$;
\item[(iv)] for every integer $i$ with $q_1+1\leq i\leq q_1+q_2$, the edge $e_i$ is incident to exactly one vertex in $N_G[w]$;
\item[(v)] for every integer $i$ with $q_1+q_2+1\leq i\leq m$, the both vertices of $e_i$ belong to $N_G[w]$.
\end{itemize}
As $w$ is a vertex of $G_s$, we conclude that $p_1<s$.

For every integer $i$ with $p_1+1\leq i\leq s$, set $W_i=V(G_i)\setminus N_G[w]$ and consider the graph $H_i=(G_i)_{W_i}$ (the induced subgraph of $G_i$ on $W_i$). Let $M_i$ be a matching of $H_i$ of size ${\rm match}(H_i)$ and let $L_i$ be the set of vertices of $H_i$ which are not covered by any edge of $M_i$. Notice that $H_i$ has at most four vertices and it is easy to check that for every integer $i$ with $p_1+1\leq i\leq s-1$, we have $|M_i|+|L_i|\leq 2$. On the other hand, recall that $w$ is a vertex of $G_s$ and hence, $|M_s|+|L_s|\leq 1$.

For every integer $i$ with $q_1+1\leq i\leq q_1+q_2$, let $v^i$ be the vertex of $e_i$ which does not belong to $N_G[w]$. Suppose that $U=\{u_1, \ldots, u_{\ell}\}$ is the set of vertices of $G$ which do not belong to $V(H)$ and set$$U'=\big(V(G-N_G[w])\cap U\big)\cup \big(\bigcup_{i=p_1+1}^sL_i\big)\cup\big\{v^{q_1+1}, \ldots, v^{q_1+q_2}\big\}.$$As $H$ is a maximal $\{K_2, C_5\}$-subgraph of $G$, we conclude that $U$ is an independent subset of vertices of $G$. Thus, every edge of the induced subgraph $(G-N_G[w])_{U'}$ is adjacent to at at least one vertex in the set$$\big(\bigcup_{i=p_1+1}^sL_i\big)\cup\big\{v^{q_1+1}, \ldots, v^{q_1+q_2}\big\}.$$This means that$${\rm match}((G-N_G[w])_{U'})\leq q_2+\sum_{i=p_1+1}^s|L_i|.$$Let $e'_1, \ldots, e'_r$ be a maximal matching of $(G-N_G[w])_{U'})$. In particular, $$r\leq q_2+\sum_{i=p_1+1}^s|L_i|.$$Note that the edges of the set$$\{e_1, \ldots, e_{q_1}, e'_1, \ldots, e'_r\}\cup\big(\bigcup_{i=p_1+1}^sM_i\big),$$ together with the $5$-cycles $G_1, \ldots, G_{p_1}$ is a maximal $\{K_2, C_5\}$-subgraph of $G-N_G[w]$. Thus,
\begin{align*}
& \min-match_{\{K_2, C_5\}}(G-N_G[w]) \leq q_1+r+2p_1+\sum_{i=p_1+1}^s|M_i|\\ & \leq q_1+q_2+2p_1 +\sum_{i=p_1+1}^s|L_i|+\sum_{i=p_1+1}^s|M_i|\\ &  =q_1+q_2+2p_1+|L_s|+|M_s|+\sum_{i=p_1+1}^{s-1}|L_i|+\sum_{i=p_1+1}^{s-1}|M_i|\\ & \leq q_1+q_2+2p_1+1+2(s-p_1-1) =q_1+q_2+2s-1\\ & \leq m+2s-1=t-1,
\end{align*}
as required.
\end{proof}

\begin{lem} \label{minh2}
For every graph $G$ and any vertex $w\in V(G)$, we have$$\min-match_{\{K_2, C_5\}}(G-w)\leq \min-match_{\{K_2, C_5\}}(G).$$
\end{lem}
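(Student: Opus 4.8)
The plan is to prove $\min\text{-}match_{\{K_2, C_5\}}(G-w)\leq \min\text{-}match_{\{K_2, C_5\}}(G)$ by starting from an optimal maximal $\{K_2, C_5\}$-subgraph of $G$ itself and converting it into a maximal $\{K_2, C_5\}$-subgraph of $G-w$ whose matching number does not increase. So let $H$ be a maximal $\{K_2, C_5\}$-subgraph of $G$ with $\operatorname{match}(H)=t=\min\text{-}match_{\{K_2, C_5\}}(G)$. If $w\notin V(H)$, then $H$ is already a subgraph of $G-w$; I would check it is still \emph{maximal} in $G-w$ (any $\{K_2,C_5\}$-subgraph of $(G-w)\setminus V(H)$ would also be one of $G\setminus V(H)$, contradicting maximality of $H$ in $G$), which immediately gives the bound. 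The substantive case is $w\in V(H)$.

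When $w\in V(H)$, the vertex $w$ lies either on a $K_2$-component or on a $C_5$-component of $H$. First I would form $H'=H-w$, which may destroy the component containing $w$: if $w$ was on an edge $e$, that edge disappears and $\operatorname{match}$ drops by one; if $w$ was on a $5$-cycle $C$, then $C-w$ becomes a path $P_4$ on four vertices, and I would replace that component by a single edge (a $K_2$-component inside $P_4$), so that the surviving subgraph is again a genuine $\{K_2, C_5\}$-subgraph; here $\operatorname{match}$ drops from $2$ to $1$. In either subcase the resulting subgraph $H'$ of $G-w$ satisfies $\operatorname{match}(H')\leq t$. The point is that deleting $w$ can only decrease, never increase, the matching number relative to $H$.

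The remaining and only real difficulty is maximality: after deleting $w$ (and discarding the two unused vertices of the broken $5$-cycle), the graph $(G-w)\setminus V(H')$ may now contain edges or $5$-cycles that were previously blocked by the deleted vertices, so $H'$ need not be a \emph{maximal} $\{K_2, C_5\}$-subgraph of $G-w$. To fix this I would greedily enlarge $H'$: repeatedly adjoin connected $\{K_2,C_5\}$-components from $(G-w)\setminus V(H')$ until no further component can be added, obtaining a maximal $\{K_2,C_5\}$-subgraph $H''$ of $G-w$. Each such addition, however, increases $\operatorname{match}$, so I cannot bound $\operatorname{match}(H'')$ by $\operatorname{match}(H')$ directly.

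Hence the main obstacle is controlling the matching number during this greedy completion, and the key observation that resolves it is that the newly available components can only appear among the at most four vertices freed up when the $C_5$ through $w$ was broken (the two vertices of $C-w$ not used by the replacement edge), together with neighbors of $w$; every other vertex outside $V(H')$ already lay outside $V(H)$ and was blocked by maximality of $H$ in $G$, a condition that deleting $w$ does not relax except near $w$. I would argue that at most one extra edge of net matching value can be gained this way, and that this gain is exactly compensated by the drop incurred when $w$ was removed, so that $\operatorname{match}(H'')\leq t$. Making this compensation argument precise — bounding how much the matching can grow against how much it dropped — is the crux; once it is established, $H''$ is a maximal $\{K_2, C_5\}$-subgraph of $G-w$ with $\operatorname{match}(H'')\leq t$, giving $\min\text{-}match_{\{K_2, C_5\}}(G-w)\leq t=\min\text{-}match_{\{K_2, C_5\}}(G)$, as desired.
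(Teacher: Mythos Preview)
Your overall strategy and your handling of the cases $w\notin V(H)$ and ``$w$ lies on a $K_2$-component'' are correct and coincide with the paper's proof: in the latter case the freed set is $U\cup\{v\}$ with $U$ independent, so any new $\{K_2,C_5\}$-component must be a single edge through $v$, compensating the drop by at most one. That is exactly the paper's Case~3.

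The gap is in your treatment of the $C_5$ case. Replacing the path $P_4=C-w$ by a \emph{single} edge and then claiming ``at most one extra edge of net matching value can be gained'' in the greedy completion is false in general. Concretely, let $G$ consist of a $5$-cycle on $w,v_1,v_2,v_3,v_4$ together with pendants $u_1$ at $v_1$ and $u_2$ at $v_4$; here $H=C$ is a maximal $\{K_2,C_5\}$-subgraph with $t=\operatorname{match}(H)=2$. If you pick the middle edge $v_2v_3$ of $P_4$, the freed vertices are $v_1,v_4$, and the induced subgraph on $U\cup\{v_1,v_4\}=\{u_1,u_2,v_1,v_4\}$ contains the disjoint edges $v_1u_1$ and $v_4u_2$. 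Greedy completion then yields a maximal $\{K_2,C_5\}$-subgraph of $G-w$ with matching number $3>t$, so your bound fails. (Of course $\min\text{-}match_{\{K_2,C_5\}}(G-w)=2$ is still $\le t$; it is your \emph{witness} that overshoots.)

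The remedy, which is precisely what the paper does, is not to drop to one edge at all: since $P_4$ has a perfect matching $e_1',e_2'$ of size $2$, replace the $C_5$-component by \emph{both} edges. Then the new subgraph has the same vertex set $V(H)\setminus\{w\}$, its complement in $G-w$ is exactly the independent set $U=V(G)\setminus V(H)$, so it is already maximal with matching number $m+2+2(s-1)=t$, and no greedy step (and hence no compensation argument) is needed.
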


\begin{proof}
Assume that $\min-match_{\{K_2, C_5\}}(G)=t$ and let $H$ be a maximal $\{K_2, C_5\}$-subgraph of $G$ with ${\rm match}(H)=t$. Suppose that $\{e_1, \ldots, e_m, G_1, \ldots, G_s\}$ is the set of connected components of $H$, where $e_1, \ldots, e_m$ are isomorphic to $K_2$ and $G_1, \ldots, G_s$ are $5$-cycles. In particular, $m+2s=t$. We consider the following cases.\\

{\bf Case1.} If $w\notin V(H)$, then $H$ is a maximal $\{K_2, C_5\}$-subgraph of $G-w$ and hence, $\min-match_{\{K_2, C_5\}}(G-w)\leq t$.\\

{\bf Case 2.} Suppose that $w$ is a vertex of $G_i$, for some integer $i$ with $1\leq i\leq s$. Without loss of generality, we may assume that $i=1$. Then $G_1-w$ has a matching $e_1', e_2'$ of size $2$. Then the edges $e_1, \ldots, e_m, e_1', e_2'$ together with the cycles $G_2, \ldots, G_s$ form a maximal $\{K_2, C_5\}$-subgraph of $G-w$, with matching number $m+2+2(s-1)=t$. Thus, $\min-match_{\{K_2, C_5\}}(G-w)\leq t$.\\

{\bf Case 3.} Suppose that $w$ is a vertex of $e_i$, for some integer $i$ with $1\leq i\leq m$. Without loss of generality, we may assume that $i=1$. Let $v$ be the other vertex of $e_1$. Set $U=V(G)\setminus V(H)$ and $U'=U\cup \{v\}$. As $H$ is a maximal $\{K_2, C_5\}$-subgraph of $G$, we conclude that $U$ is an independent subset of vertices of $G$. In particular, every edge of $G_{U'}$ is adjacent to $v$. Hence, ${\rm match}(G_{U'})\leq 1$. Suppose that $M$ is a maximal matching of $G_{U'}$. In particular, $|M|\leq 1$. Note that the edges of the set $\{e_2, \ldots, e_m\}\cup M$ together with the cycles $G_1, \ldots, G_s$ form a maximal $\{K_2, C_5\}$-subgraph of $G-w$, with matching number $\leq m+2s=t$. Thus, $\min-match_{\{K_2, C_5\}}(G-w)\leq t$.
\end{proof}

We are now ready to prove the first main result of this paper.

\begin{thm} \label{regh}
For every graph $G$ and for every integer $s\geq 1$, we have$${\rm reg}(I(G)^s)\leq 2s+\min-match_{\{K_2, C_5\}}(G)-1.$$
\end{thm}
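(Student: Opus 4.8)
The plan is to invoke Lemma~\ref{hered} with $\mathcal{F}$ the class of all simple graphs and with the function $f(G) = \min\text{-}\mathrm{match}_{\{K_2,C_5\}}(G) + 1$. We must verify the two hypotheses of that lemma. Hypothesis~(1) is exactly the inequality ${\rm reg}\,I(G) \leq \min\text{-}\mathrm{match}_{\{K_2,C_5\}}(G) + 1$, which is \cite[Theorem 3.8]{sy} and has already been recalled in the excerpt. So the entire content of the proof is hypothesis~(2): for every graph $G$ with at least one edge we need a vertex $w$ such that $G-w$ and $G-N_G[w]$ are again in $\mathcal{F}$ (automatic, since $\mathcal{F}$ is all graphs), and such that $f(G-w)\leq f(G)$ and $f(G-N_G[w])\leq \max\{f(G)-1,2\}$.

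The two inequalities on $f$ are precisely what Lemmata~\ref{minh} and \ref{minh2} deliver. First I would apply Lemma~\ref{minh} to produce a vertex $w\in V(G)$ with $\min\text{-}\mathrm{match}_{\{K_2,C_5\}}(G-N_G[w]) \leq \min\text{-}\mathrm{match}_{\{K_2,C_5\}}(G) - 1$; adding $1$ to both sides gives $f(G-N_G[w]) \leq f(G) - 1 \leq \max\{f(G)-1,2\}$, which is condition~(2b) for the closed-neighbourhood deletion. Then, for that same $w$, Lemma~\ref{minh2} gives $\min\text{-}\mathrm{match}_{\{K_2,C_5\}}(G-w) \leq \min\text{-}\mathrm{match}_{\{K_2,C_5\}}(G)$, i.e.\ $f(G-w) \leq f(G)$, which is the other half of condition~(2b). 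Note that Lemma~\ref{minh2} holds for an arbitrary vertex, so there is no conflict in using the specific $w$ produced by Lemma~\ref{minh}. With both conditions checked, Lemma~\ref{hered} yields ${\rm reg}(I(G)^s) \leq 2s + f(G) - 2 = 2s + \min\text{-}\mathrm{match}_{\{K_2,C_5\}}(G) - 1$ for all $s\geq 1$, which is the claim.

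There is essentially no obstacle left at this stage: the proof is a two-line bookkeeping argument assembling Lemma~\ref{hered}, Lemma~\ref{minh}, Lemma~\ref{minh2} and \cite[Theorem 3.8]{sy}. The genuine work was front-loaded into Lemma~\ref{minh}, whose proof is the delicate combinatorial step --- one must carefully track how a maximal $\{K_2,C_5\}$-subgraph $H$ of $G$ degrades when passing to $G-N_G[w]$: the basic point is that deleting $N_G[w]$ for a vertex $w$ of a $C_5$-component kills that whole component's contribution of $2$ to the matching number while the surviving pieces of the other $C_5$-components and the $K_2$-components can be recombined into a maximal $\{K_2,C_5\}$-subgraph of $G-N_G[w]$ of matching number at most $t-1$, using the independence of the uncovered vertices. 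That verification is already done in the excerpt, so in Theorem~\ref{regh} itself I would simply cite these ingredients in order and conclude. One small remark I would include: when $\min\text{-}\mathrm{match}_{\{K_2,C_5\}}(G) = 0$, i.e.\ $G$ has no edge, the statement is trivial (both sides are handled directly, or one notes $f(G)=1$ and $I(G)^s = 0$), so the reduction to the ``at least one edge'' case in Lemma~\ref{hered} is harmless.
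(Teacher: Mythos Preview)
Your proposal is correct and follows essentially the same route as the paper: take $\mathcal{F}$ to be all simple graphs, set $f(G)=\min\text{-}\mathrm{match}_{\{K_2,C_5\}}(G)+1$, invoke \cite[Theorem 3.8]{sy} for hypothesis~(1), and use Lemmata~\ref{minh} and~\ref{minh2} to verify hypothesis~(2), then apply Lemma~\ref{hered}. Your added remarks (that Lemma~\ref{minh2} applies to the specific $w$ from Lemma~\ref{minh}, and the trivial edgeless case) are sound clarifications of points the paper leaves implicit.
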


\begin{proof}
For any graph $G$, set $f(G)=\min-match_{\{K_2, C_5\}}(G)$. We know from \cite[Theorem 3.8]{sy} that ${\rm reg}(I(G))\leq f(G)+1$. It follows from Lemma \ref{minh} that for every $G$ with at least on edge, there exists a vertex $w\in V(G)$ with $f(G-N_G[w])\leq f(G)-1$. We also know by Lemma \ref{minh2} that for any induced subgraph $H$ of $G$, the inequality $f(H)\leq f(G)$ holds. Let $e$ be an edge of $G$ and let $L$ be the disjoin union of $G-N_G[e]$ and $e$. Then $L$ is an induced subgraph of $G$. Thus,$$f(G-N_G[e])\leq f(L)-1\leq f(G)-1,$$where the first inequality follows from the definition of $f$ and the second inequality follows Lemma \ref{minh2}. Hence, Lemma \ref{hered} implies that$${\rm reg}(I(G)^s)\leq 2s+f(G)-1=2s+\min-match_{\{K_2, C_5\}}(G)-1.$$
\end{proof}

Banerjee, Beyarslan and H${\rm \grave{a}}$, \cite[Theorem 3.4]{bbh}, prove that for every graph $G$ and every integer $s\geq 1$,$${\rm reg}(I(G)^s)\leq 2s+{\rm match}(G)-1.$$The following corollary is an immediate consequence of Theorem \ref{regh} and improves \cite[Theorem 3.4]{bbh}.

\begin{cor} \label{minm}
For every graph $G$ and for every integer $s\geq 1$, we have$${\rm reg}(I(G)^s)\leq 2s+\min-match(G)-1.$$
\end{cor}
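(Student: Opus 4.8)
The plan is to derive this directly from Theorem \ref{regh}, using only the elementary inequality
$$\min-match_{\{K_2, C_5\}}(G) \leq \min-match(G),$$
which holds for every graph $G$ and is already noted in Section \ref{sec2}. So the real content of the proof is just to record why this inequality is true, and then substitute.

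To see the inequality, I would first observe that a subset $M\subseteq E(G)$ is a maximal matching of $G$ precisely when the subgraph of $G$ with edge set $M$ is a maximal $\{K_2\}$-subgraph: in both formulations one asks that $M$ be a matching and that $G-V(M)$ have no edge. Next comes the one point that deserves a moment's care: I claim that every maximal matching $M$ of $G$ is in fact a maximal $\{K_2, C_5\}$-subgraph of $G$. Indeed, the subgraph with edge set $M$ has all components isomorphic to $K_2$, hence is certainly a $\{K_2, C_5\}$-subgraph; and since $G-V(M)$ has no edge, it contains neither a $K_2$ nor a $C_5$, so $G-V(M)$ has no $\{K_2, C_5\}$-subgraph at all. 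Thus $M$ is maximal as a $\{K_2, C_5\}$-subgraph. Consequently the minimum of ${\rm match}(H)$ taken over the (a priori larger) family of maximal $\{K_2, C_5\}$-subgraphs $H$ of $G$ is at most the minimum of $|M|$ taken over maximal matchings $M$ of $G$, which is exactly the asserted inequality.

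Granting this, the corollary is immediate: Theorem \ref{regh} gives ${\rm reg}(I(G)^s)\leq 2s+\min-match_{\{K_2, C_5\}}(G)-1$ for every $s\geq 1$, and combining with $\min-match_{\{K_2, C_5\}}(G)\leq\min-match(G)$ yields ${\rm reg}(I(G)^s)\leq 2s+\min-match(G)-1$. The empty-graph case needs no separate treatment, since there $\min-match_{\{K_2,C_5\}}(G)=\min-match(G)=0$ and the conclusion is already part of Theorem \ref{regh}.

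There is essentially no obstacle to overcome here; if anything, the only subtlety is the remark that enlarging the allowed family of components from $\{K_2\}$ to $\{K_2, C_5\}$ does not destroy maximality of a maximal matching, precisely because an edgeless graph also contains no $5$-cycle. One could alternatively cite \cite{sy} directly for the inequality $\min-match_{\{K_2, C_5\}}(G)\leq\min-match(G)$ and make the proof a single line.
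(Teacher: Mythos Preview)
Your proof is correct and follows exactly the paper's approach: the paper's own proof is the one-line ``the assertion follows from Theorem \ref{regh} and the inequality $\min\text{-}match_{\{K_2, C_5\}}(G)\leq \min\text{-}match(G)$.'' You have simply fleshed out the justification for that inequality, which is fine and accurate.
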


\begin{proof}
The assertion follows from Theorem \ref{regh} and the inequality$$\min-match_{\{K_2, C_5\}}(G)\leq \min-match(G).$$
\end{proof}

\begin{cor}
Let $G$ be a graph with $\ind-match(G)=\min-match(G)$. Then for every integer $s\geq 1$, we have$${\rm reg}(I(G)^s)=2s+\ind-match(G)-1$$
\end{cor}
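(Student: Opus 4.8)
The plan is to sandwich $\mathrm{reg}(I(G)^s)$ between the lower bound of Beyarslan--H\`a--Trung and the upper bound of Corollary~\ref{minm}, and observe that the hypothesis $\ind\textnormal{-}\mathrm{match}(G)=\min\textnormal{-}\mathrm{match}(G)$ forces these two bounds to coincide. Concretely, I would first recall that for every graph $G$ and every integer $s\geq 1$ one has
$$\mathrm{reg}(I(G)^s)\geq 2s+\ind\textnormal{-}\mathrm{match}(G)-1,$$
which is the Beyarslan--H\`a--Trung inequality quoted in the introduction. On the other side, Corollary~\ref{minm} gives
$$\mathrm{reg}(I(G)^s)\leq 2s+\min\textnormal{-}\mathrm{match}(G)-1.$$

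Next I would combine these: under the standing assumption $\ind\textnormal{-}\mathrm{match}(G)=\min\textnormal{-}\mathrm{match}(G)$, call this common value $d$. Then the displayed inequalities read $2s+d-1\leq \mathrm{reg}(I(G)^s)\leq 2s+d-1$, so equality holds throughout and $\mathrm{reg}(I(G)^s)=2s+d-1=2s+\ind\textnormal{-}\mathrm{match}(G)-1$, as claimed. This completes the argument.

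Honestly, there is no real obstacle here: the corollary is a one-line consequence of two inequalities that are already established (one cited in the introduction, one proved immediately above as Corollary~\ref{minm}). The only thing to be slightly careful about is the edge case where $G$ has no edges, in which case $I(G)=0$, all the matching numbers are $0$, and the formula $\mathrm{reg}(I(G)^s)=2s-1$ should be read appropriately (or that degenerate case excluded), but this is a triviality rather than a genuine difficulty. So the proof I would write is simply: \emph{invoke the Beyarslan--H\`a--Trung lower bound and Corollary~\ref{minm}, then use the hypothesis to see that the lower and upper bounds agree.}
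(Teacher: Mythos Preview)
Your proposal is correct and is essentially identical to the paper's own proof: the paper also cites the Beyarslan--H\`a--Trung lower bound (\cite[Theorem 4.5]{bht}) together with Corollary~\ref{minm}, obtains the chain $2s+\ind\text{-}{\rm match}(G)-1\leq {\rm reg}(I(G)^s)\leq 2s+\min\text{-}{\rm match}(G)-1$, and concludes by invoking the hypothesis.
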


\begin{proof}
We know from \cite[Theorem 4.5]{bht} and Corollary \ref{minm} that for every integer $s\geq 1$,$$2s+\ind-match(G)-1\leq {\rm reg}(I(G)^s)\leq 2s+\min-match(G)-1.$$The assertion now follows from the hypothesis.
\end{proof}

We recall that a characterization of graphs which satisfy the equality $\ind-match(G)=\min-match(G)$ was obtained in \cite[Theorem 2.3]{hhkt}.

As the final result of this section, we determine an upper bound for the regularity of powers of edge ideals, in terms of the ordered matching number. It improves the result of Herzog and Hibi \cite[Theorem 1]{hh}.

\begin{thm} \label{rego}
For every graph $G$ and for every integer $s\geq 1$, we have$${\rm reg}(I(G)^s)\leq 2s+\ord-match(G)-1.$$
\end{thm}

\begin{proof}
For any graph $G$, set $f(G)=\ord-match(G)$. We know from \cite[Remark 4.8]{cv} (see also \cite[Corollary 2.5]{s4}) that ${\rm reg}(I(G))\leq f(G)+1$. It follows from \cite[Lemma 2.1]{s4} that for every vertex $w\in V(G)$ we have $f(G-N_G[w])\leq f(G)-1$. It is clear that for any induced subgraph $H$ of $G$, the inequality $f(H)\leq f(G)$ holds. Let $e=xy$ be an edge of $G$. Then $G-N_G[x]$ is an induced subgraph of $G-N_G[e]$. Therefore,$$f(G-N_G[e])\leq f(G-N_G[x])\leq f(G)-1,$$where the last inequality follows from \cite[Lemma 2.1]{s4}. Hence, Lemma \ref{hered} implies that$${\rm reg}(I(G)^s)\leq 2s+f(G)-1= 2s+\ord-match(G)-1.$$
\end{proof}


\section{A lower bound} \label{sec4}

In this section, we determine a lower bound for the regularity of powers of edge ideals. It was shown in \cite[Theorem 3.6]{sy} that for every graph $G$, we have $$\ind-match_{\{K_2, C_5\}}(G)+1\leq {\rm reg}(I(G)).$$Based on this inequality, one may guess that for every graph $G$ and every integer $s\geq 1$, we have$$2s+\ind-match_{\{K_2, C_5\}}(G)-1\leq {\rm reg}(I(G)^s).$$But, as we mentioned in the introduction, this inequality is not true in general. However, we have the following result.

\begin{thm} \label{low}
For every graph $G$ and for every integer $s\geq 1$, we have$${\rm max}\{2s+\ind-match(G)-1, 2s+\ind-match_{\{K_2, C_5\}}(G)-2\}\leq {\rm reg}(I(G)^s).$$If moreover, $\ind-match_{\{K_2, C_5\}}(G)$ is an odd integer, then$$2s+\ind-match_{\{K_2, C_5\}}(G)-1\leq {\rm reg}(I(G)^s) \ \ \ \ \ \ \ {\rm for \ all} \ s\geq 1.$$
\end{thm}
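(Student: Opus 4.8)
The plan is to prove the two lower bounds separately. The bound $2s+\ind-match(G)-1\leq {\rm reg}(I(G)^s)$ is exactly \cite[Theorem 4.5]{bht} (the Beyarslan--H\`a--Trung inequality), so nothing new is needed there. The work lies in establishing $2s+\ind-match_{\{K_2, C_5\}}(G)-2\leq {\rm reg}(I(G)^s)$, and the sharper statement $2s+\ind-match_{\{K_2, C_5\}}(G)-1\leq {\rm reg}(I(G)^s)$ when $\ind-match_{\{K_2, C_5\}}(G)$ is odd.

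First I would pick an induced $\{K_2,C_5\}$--subgraph $H$ of $G$ with ${\rm match}(H)=\ind-match_{\{K_2,C_5\}}(G)=:c$; write its components as $e_1,\dots,e_m$ (copies of $K_2$) and $D_1,\dots,D_k$ (induced $5$-cycles), so $c=m+2k$. Since $H$ is an \emph{induced} subgraph, and edge ideals behave well under taking induced subgraphs, the standard localization/restriction argument gives ${\rm reg}(I(G)^s)\geq {\rm reg}(I(H)^s)$; I would either cite this monotonicity (it follows from the fact that $I(H)^s$ is obtained from $I(G)^s$ by setting the outside variables to zero, up to a polarization/flat-degeneration argument, as used throughout \cite{bht,bbh}) or, more safely, invoke the version already packaged for powers in those references. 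Now $H$ is a disjoint union of graphs, and for a disjoint union $G_1\sqcup G_2$ one has the well-known formula ${\rm reg}(I(G_1\sqcup G_2)^s)=\max_{s_1+s_2=s+1}\big({\rm reg}(I(G_1)^{s_1})+{\rm reg}(I(G_2)^{s_2})\big)$ minus the usual normalization (this is the Banerjee--Beyarslan--H\`a disjoint-union formula for powers; I would cite it). Feeding in the single-edge contribution ${\rm reg}(I(K_2)^t)=2t$ and a lower bound for ${\rm reg}(I(C_5)^t)$ reduces everything to understanding powers of the $5$-cycle.

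The heart of the matter is therefore a lower bound for ${\rm reg}(I(C_5)^t)$. I would use the known value ${\rm reg}(I(C_5)^t)=2t+1$ for all $t\geq 1$ — this is computed in the literature on regularity of powers of cycles (e.g.\ Beyarslan--H\`a--Trung and subsequent work), and for the $5$-cycle it can also be seen directly since $\ind-match(C_5)=1$ gives ${\rm reg}(I(C_5)^t)\geq 2t+1$ by \cite{bht}, while ${\rm reg}(I(C_5))=3=2\cdot 1+1$ and $C_5$ is vertex decomposable (it is in $\mathcal{PC}$), so the Banerjee--Beyarslan--H\`a/Jayanthan--Selvaraja bound ${\rm reg}(I(C_5)^t)\leq 2t+{\rm reg}(I(C_5))-2=2t+1$ gives equality. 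Now combine: distributing the exponent $s$ optimally among the $m+k$ components via the disjoint-union formula, the ``best'' allocation assigns all of the surplus to one component and contributes $2(s-1)$ from the normalization plus $\sum$(regularities at exponent $1$), namely $2m+3k$ when $k\geq 1$. One computes $2(s-1)+2m+3k = 2s+2m+3k-2 = 2s+(m+2k)+(m+k)-2 = 2s+c+(m+k)-2$. When $k\geq 1$ we have $m+k\geq 1$, so this is $\geq 2s+c-1$; when $k=0$ we have $c=m$ and the analogous computation gives $2s+c-1$ as well (which is in fact the $\ind-match$ bound). Either way ${\rm reg}(I(G)^s)\geq 2s+c-1\geq 2s+c-2$, which already proves the first inequality. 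For the parity refinement: when $c=m+2k$ is odd, $m$ is odd, hence $m\geq 1$, so $m+k\geq 1$ even in the case $k=0$, and the displayed quantity is again $\geq 2s+c-1$, giving the sharper bound.

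The main obstacle I anticipate is justifying the two ``black-box'' ingredients with the right generality: (i) the induced-subgraph monotonicity ${\rm reg}(I(G)^s)\geq {\rm reg}(I(H)^s)$ for powers — one must be careful that this holds for \emph{all} $s$, not just $s=1$, but this is standard (it follows by specializing variables, since $I(H)^s S' = (I(G)^s + (\text{outside variables}))/(\text{outside variables})$ up to the usual short-exact-sequence bookkeeping, as in \cite{bht}); and (ii) the exact disjoint-union formula for regularity of powers, which I would lift verbatim from \cite{bbh} or \cite{js}. Once these are in place the argument is essentially the arithmetic of $2(s-1)+2m+3k$ together with the elementary parity observation that $c$ odd forces $m\geq 1$. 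I would also double-check the degenerate case where $H$ has no edges ($c=0$), in which both claimed inequalities read ${\rm reg}(I(G)^s)\geq 2s-2$, trivially true since ${\rm reg}(I(G)^s)\geq 2s$ whenever $G$ has an edge (and the statement is vacuous otherwise), so there is no edge case to worry about.
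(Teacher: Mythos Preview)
Your argument contains a genuine error at its core: you claim ${\rm reg}(I(C_5)^t)=2t+1$ for all $t\geq 1$, but this is false. Since $\ind-match(C_5)=1$, the Beyarslan--H\`a--Trung bound gives only ${\rm reg}(I(C_5)^t)\geq 2t+1-1=2t$, not $2t+1$ as you wrote; and in fact ${\rm reg}(I(C_5)^t)=2t$ for every $t\geq 2$ by \cite[Theorem~5.2]{bht}. (This is precisely the example cited in the introduction to show that $2s+\ind-match_{\{K_2,C_5\}}(G)-1\leq{\rm reg}(I(G)^s)$ can fail.) With your value plugged in, your computation ``proves'' ${\rm reg}(I(H)^s)\geq 2s+c-1$ in \emph{all} cases, which is already contradicted by $H=C_5$, $c=2$, $s\geq 2$.

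The parity observation ``$c$ odd $\Rightarrow m\geq 1$'' is correct and is indeed the key to the second assertion, but in your write-up it does no work: your bound $2s+c+(m+k)-2\geq 2s+c-1$ only needs $m+k\geq 1$, which holds whenever $H$ has any component at all. The real role of $m\geq 1$ is that the $K_2$ and $C_5$ components behave \emph{differently} under powers: ${\rm reg}(I(K_2)^t)=2t$ for all $t$, whereas ${\rm reg}(I(C_5)^t)$ drops from $3$ at $t=1$ to $2t$ for $t\geq 2$. The paper's proof exploits this asymmetry via the Nguyen--Vu inequality ${\rm reg}(I(H)^s)\geq {\rm reg}(I(H')^s)+{\rm reg}(I(G_k))-1$, placing the full exponent $s$ on $H'$ and only exponent $1$ on the peeled-off $C_5$ (so that ${\rm reg}(I(C_5))=3$ may be used), and inducts on the number $k$ of $C_5$-components. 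Each peel contributes $+2$ while decreasing $c$ by $2$; the base case $k=0$ is a matching and gives $2s+m-1$. This yields $2s+c-2$ in general, and $2s+c-1$ provided the base case is nonempty, i.e.\ $m\geq 1$, which is forced exactly when $c$ is odd. If you want to salvage your disjoint-union approach, you must allocate the surplus exponent to a $K_2$ component rather than a $C_5$; that allocation is available only when $m\geq 1$, and this is where the parity genuinely enters.
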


\begin{proof}
The inequality $2s+\ind-match(G)-1\leq {\rm reg}(I(G)^s)$ is known by \cite[Theorem 4.5]{bht}. Thus, we only need to prove that
\[
\begin{array}{rl}
{\rm reg}(I(G)^s)\geq 2s+\ind-match_{\{K_2, C_5\}}(G)-2 \ \ \ \ \ \ \ {\rm for \ all} \ s\geq 1
\end{array} \tag{$\ast$} \label{astast}
\]
For $s=1$, the inequality (\ref{astast}) follows from \cite[Corollary 3.9]{sy}. Hence, suppose that $s\geq 2$. Assume that $\ind-match_{\{K_2, C_5\}}(G)=t$ and let $H$ be an induced $\{K_2, C_5\}$--subgraph of $G$ with ${\rm match}(H)=t$. By \cite[Corollary 4.3]{bht}, it is enough to prove that
\[
\begin{array}{rl}
{\rm reg}(I(H)^s)\geq 2s+t-2 \ \ \ \ \ \ \ {\rm for \ all} \ s\geq 2
\end{array} \tag{$\ast\ast$} \label{ast}
\]
Suppose that $\{e_1, \ldots, e_m, G_1, \ldots, G_k\}$ is the set of connected components of $H$, where $e_1, \ldots, e_m$ are isomorphic to $K_2$ and $G_1, \ldots, G_k$ are $5$-cycles. We use induction on $k$. If $k=0$, then the inequality (\ref{ast}) follows from \cite[Lemma 4.4]{bht}. Thus, assume that $k\geq 1$. Let $H'$ be the graph with connected components $e_1, \ldots, e_m, G_1, \ldots, G_{k-1}$. Then $I(H)=I(H')+I(G_k)$. If $I(H')=0$ (i.e., $m=0$ and $k=1$), then $I(H)=I(G_k)$ and the inequality (\ref{ast}) follows from \cite[Theorem 5.2]{bht}. Hence, suppose that $I(H')\neq 0$. In this case, \cite[Theorem 1.1]{nv} implies that$${\rm reg}(I(H)^s)\geq {\rm reg}(I(H')^s)+{\rm reg}(I(G_k))-1.$$As $G_k$ is a $5$-cycle, we know that ${\rm reg}(I(G_k))=3$. On the other hand, it follows from the induction hypothesis that$${\rm reg}(I(H')^s)\geq 2s+\ind-match_{\{K_2, C_5\}}(H')-2=2s+t-4.$$Therefore,$${\rm reg}(I(H)^s)\geq {\rm reg}(I(H')^s)+{\rm reg}(I(G_k))-1=2s+t-4+3-1=2s+t-2.$$For the last part of theorem, again notice that the case $s=1$ follows from \cite[Corollary 3.9]{sy}. For $s\geq 2$, we use the similar argument (and the same notations) as above. As $t$ is an odd integer, it follows that $m\neq 0$ and hence, $I(H')\neq 0$. On the other hand, $\ind-match_{\{K_2, C_5\}}(H')=t-2$ is an odd integer. Therefore, the induction hypothesis implies that$${\rm reg}(I(H')^s)\geq 2s+\ind-match_{\{K_2, C_5\}}(H')-1=2s+t-3.$$Again, \cite[Theorem 1.1]{nv} implies that$${\rm reg}(I(H)^s)\geq {\rm reg}(I(H')^s)+{\rm reg}(I(G_k))-1\geq 2s+t-3+3-1=2s+t-1.$$
\end{proof}

\begin{rem}
The proof of Theorem \ref{low} shows that if $G$ has an induced $\{K_2, C_5\}$--subgraph $H$ with $\ind-match_{\{K_2, C_5\}}(G)={\rm match}(H)$ such that at least one connected component of $H$ is isomorphic to $K_2$, then$$2s+\ind-match_{\{K_2, C_5\}}(G)-1\leq {\rm reg}(I(G)^s) \ \ \ \ \ \ \ {\rm for \ all} \ s\geq 1.$$
\end{rem}


\section{An Example} \label{sec5}

In this section, we investigate the following question asked by Jayanthan, Narayanan and Selvaraja.

\begin{ques} \label{ques}
{\rm (}\cite[Question 5.8]{jns}{\rm )} Does there exist any graph $G$ which satisfies the inequalities$$2s+ \ind-match(G)-1< {\rm reg}(I(G)^s)< 2s+{\rm cochord}(G)-1$$for every integer $s\gg 0$?
\end{ques}

Recently, Jayanthan and Selvaraja \cite{js} gave a positive answer to this question by constructing a family of disconnected graphs for which the above inequalities hold. In this section, we show the answer of Question \ref{ques} is again positive, if we restrict ourselves to the category of connected graphs. In other words, we present infinitely many
connected graphs which satisfy the strict inequalities of Question \ref{ques}, for every $s\geq 1$.

For every integer $n\geq 1$, let $H_n$ be the graph with vertex set$$V(H_n)=\bigcup_{i=1}^n\{v_1^i, v_2^i, v_3^i, v_4^i, v_5^i\}$$and edge set$$E(H_n)=\bigcup_{i=1}^n\{v_1^iv_2^i, v_2^iv_3^i, v_3^iv_4^i, v_4^iv_5^i, v_1^iv_5^i\}\cup\{v_3^iv_1^{i+1} | 1\leq i\leq n-1\}.$$The graph $H_3$ is shown in Figure \ref{trian}. Assume that $W(H_n)$ is the graph obtained from $H_n$ by attaching a whisker to every vertex of $H_n$. As $H_n$ has no triangle, we conclude that among any three vertices of $H_n$, at least two of them are independent. This means that among any three whiskers of $W(H_n)$, at least two of them form a gap. Hence, any co-chordal subgraph of $W(H_n)$ contains at most two whiskers. This implies that$${\rm cochord}(W(H_n))\geq \frac{|V(H_n)|}{2}=\frac{5n}{2}.$$We know from \cite{v1} that $W(H_n)$ is a Cohen--Macaulay graph. By \cite[Theorem 4.3]{sy} and \cite[Theorem 13]{bv},
\[
\begin{array}{rl}
\ind-match_{\{K_2, C_5\}}(W(H_n))={\rm reg}(S/I(W(H_n)))=\ind-match(W(H_n)).
\end{array} \tag{$\ddagger$} \label{ddag}
\]
Since the independence number of a $5$-cycle is two, it follows that the independence number of $H_n$ is at most $2n$ and hence,$$\ind-match(W(H_n))\leq 2n.$$
\begin{figure}[h!]
\centering
\includegraphics{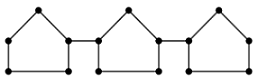}
\vspace{-0.1cm}
\caption{The graph $H_3$} \label{trian}
\end{figure}

Let $H$ be the graph shown Figure \ref{graph}. Note that $\ind-match_{\{K_2, C_5\}}(H)=6$ and $\ind-match(H)=4$.
\begin{figure}[h!]
\centering
\includegraphics{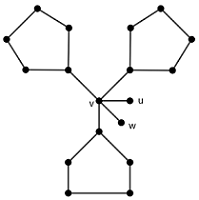}
\vspace{-0.1cm}
\caption{The graph $H$} \label{graph}
\end{figure}

Assume that $x$ is an arbitrary vertex of $H_n$ and suppose that $G_n$ is the graph obtained from $H$ and $W(H_n)$ by identifying the vertices $u$ and $x$, i.e., $G_n$ is the graph with vertex set$$V(G_n)=V(H\setminus u)\cup V(W(H_n)\setminus x)\cup \{z\},$$(where $z$ is a new vertex) and its edge set is defined as$$E(G_n)=E(H\setminus u)\cup E(W(H_n)\setminus x)\cup \{zy\mid y\in N_H(u)\cup N_{W(H_n)}(x)\}.$$As $W(H_n)$ is an induced subgraph of $G_n$, we conclude that$${\rm cochord}(G_n)\geq {\rm cochord}(W(H_n))\geq \frac{5n}{2}.$$On the other hand, it is clear that$$\ind-match(G_n)\leq \ind-match(W(H_n))+\ind-match(H)=\ind-match(W(H_n))+4.$$

To compute $\ind-match_{\{K_2, C_5\}}(G_n)$, let $L$ be an induced $\{K_2, C_5\}$--subgraph of $W(H_n)$. Then the union of $L$ and the three $5$-cycles of $H$ forms an induced $\{K_2, C_5\}$--subgraph of $G_n$. Thus,
\begin{align*}
& \ind-match_{\{K_2, C_5\}}(W(H_n))+6\leq \ind-match_{\{K_2, C_5\}}(G_n)\\ & \leq \ind-match_{\{K_2, C_5\}}(W(H_n))+\ind-match_{\{K_2, C_5\}}(H)\\ & =\ind-match_{\{K_2, C_5\}}(W(H_n))+6,
\end{align*}
Therefore,
\begin{align*}
& \ind-match_{\{K_2, C_5\}}(G_n)=\ind-match_{\{K_2, C_5\}}(W(H_n))+6\\ & =\ind-match(W(H_n))+6\leq 2n+6,
\end{align*}
where the second equality follows from the equalities \ref{ddag}.

Setting$$P(G_n)=V(W(H_n)\setminus x)\cup\{z, v, w\}$$ and$$C(G_n)=V(H)\setminus \{u, v, w\},$$we see that the graph $G_n$ belongs to the class $\mathcal{PC}$ and hence, by \cite[Theorem 2.4]{hmt}, it is a vertex decomposable graph. Thus, for any $n\geq 13$ and every $s\geq 1$, we have
\begin{align*}
& 2s+\ind-match(G_n)-1\leq 2s+\ind-match(W(H_n))+4-1\\ & =2s+\ind-match_{\{K_2, C_5\}}(W(H_n))+3 < 2s+\ind-match_{\{K_2, C_5\}}(W(H_n))+4\\ & =2s+\ind-match_{\{K_2, C_5\}}(G_n)-2\leq {\rm reg}(I(G_n)^s)\leq 2s+\ind-match_{\{K_2, C_5\}}(G_n)-1\\ & \leq 2s+2n+6-1 < 2s+\frac{5n}{2}-1\leq 2s+{\rm cochord}(G_n)-1.
\end{align*}
Here, the third inequality follows from Theorem \ref{low}, the fourth inequality follows from \cite[Theorem 5.5]{js}, and the sixth inequality follows from the fact that $n\geq 13$.

Therefore, we proved the following result.

\begin{prop}
Using the notations as above, for every integer $s\geq 1$ and every integer $n\geq 13$, we have$$2s+\ind-match(G_n)-1< {\rm reg}(I(G_n)^s)< 2s+{\rm cochord}(G_n)-1.$$
\end{prop}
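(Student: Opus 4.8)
The plan is essentially one of bookkeeping: assemble the estimates on ${\rm cochord}(G_n)$, $\ind-match(G_n)$ and $\ind-match_{\{K_2, C_5\}}(G_n)$ collected above, turn the last one into a two-sided bound on ${\rm reg}(I(G_n)^s)$ via Corollary \ref{girth}, and then check that the two outer inequalities become strict once $n$ is large. The first step is to confirm that $G_n$ is a Cohen--Macaulay graph of girth at least five. For this I would verify that the partition $V(G_n)=P(G_n)\cup C(G_n)$ with $P(G_n)=V(W(H_n)\setminus x)\cup\{z,v,w\}$ and $C(G_n)=V(H)\setminus\{u,v,w\}$ exhibits $G_n$ as a member of the class $\mathcal{PC}$: the whiskers of $W(H_n)$ together with the two pendant edges of $H$ at $v$ and $w$ form a perfect matching on the induced subgraph on $P(G_n)$, and the three basic $5$-cycles of $H$ partition $C(G_n)$. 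Since $H_n$, its whiskering, and $H$ contain no cycle shorter than five and the identification $u=x$ (together with splitting off the new vertex $z$) creates none, $G_n$ has girth at least five, so \cite[Theorem 2.4]{hmt} gives Cohen--Macaulayness and Corollary \ref{girth} yields $$2s+\ind-match_{\{K_2, C_5\}}(G_n)-2\leq {\rm reg}(I(G_n)^s)\leq 2s+\ind-match_{\{K_2, C_5\}}(G_n)-1$$ for every $s\geq 1$.

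Next I would pin down the three invariants. Since $W(H_n)$ is an induced subgraph of $G_n$, we have ${\rm cochord}(G_n)\geq {\rm cochord}(W(H_n))$; as $H_n$ is triangle-free, among any three whiskers of $W(H_n)$ two form a gap, so each co-chordal subgraph uses at most two whiskers and ${\rm cochord}(W(H_n))\geq \frac{5n}{2}$. For the matching numbers, the three $5$-cycles of $H$ are vertex-disjoint from $W(H_n)\setminus x$ inside $G_n$ and induce no new edges, so adjoining them to any induced $\{K_2, C_5\}$--subgraph of $W(H_n)$ gives $\ind-match_{\{K_2, C_5\}}(G_n)\geq \ind-match_{\{K_2, C_5\}}(W(H_n))+6$; the reverse is the trivial additivity bound using $\ind-match_{\{K_2, C_5\}}(H)=6$, so equality holds. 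Combining with \ref{ddag} (i.e. \cite[Theorem 4.3]{sy} and \cite[Theorem 13]{bv}) and with the bound $2n$ on the independence number of $H_n$ gives $\ind-match_{\{K_2, C_5\}}(G_n)=\ind-match(W(H_n))+6\leq 2n+6$, while the same disjointness yields $\ind-match(G_n)\leq \ind-match(W(H_n))+\ind-match(H)=\ind-match(W(H_n))+4$.

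Finally I would chain the estimates. On the left, using $\ind-match(W(H_n))=\ind-match_{\{K_2, C_5\}}(W(H_n))$ and $\ind-match_{\{K_2, C_5\}}(G_n)=\ind-match_{\{K_2, C_5\}}(W(H_n))+6$, $$2s+\ind-match(G_n)-1\leq 2s+\ind-match_{\{K_2, C_5\}}(W(H_n))+3<2s+\ind-match_{\{K_2, C_5\}}(G_n)-2\leq {\rm reg}(I(G_n)^s);$$ on the right, $${\rm reg}(I(G_n)^s)\leq 2s+\ind-match_{\{K_2, C_5\}}(G_n)-1\leq 2s+2n+5<2s+\frac{5n}{2}-1\leq 2s+{\rm cochord}(G_n)-1,$$ where the strict step $2n+5<\frac{5n}{2}-1$ is exactly $n>12$, hence holds for every $n\geq 13$.

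The one place where genuine work is needed is the combinatorial input about the fixed graph $H$ of Figure \ref{graph}: one must check directly that $\ind-match(H)=4$ whereas $\ind-match_{\{K_2, C_5\}}(H)=6$, and one must make sure the identification of $u$ with $x$ does not create an induced $\{K_2, C_5\}$--subgraph straddling the $H$-part and the $W(H_n)$-part that would raise $\ind-match_{\{K_2, C_5\}}(G_n)$ above $\ind-match_{\{K_2, C_5\}}(W(H_n))+6$, nor break the $\mathcal{PC}$ partition above. These are finite verifications on an explicit small graph, but they are exactly where the slack in the chain of inequalities is spent.
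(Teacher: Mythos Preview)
Your proposal is correct and follows essentially the same route as the paper: verify $G_n\in\mathcal{PC}$ via the partition $P(G_n)=V(W(H_n)\setminus x)\cup\{z,v,w\}$ and $C(G_n)=V(H)\setminus\{u,v,w\}$, invoke Corollary~\ref{girth}, assemble the estimates ${\rm cochord}(G_n)\geq 5n/2$, $\ind-match(G_n)\leq \ind-match(W(H_n))+4$, and $\ind-match_{\{K_2,C_5\}}(G_n)=\ind-match(W(H_n))+6\leq 2n+6$ via (\ref{ddag}), and then run the same numerical chain, with the strict step $2n+5<\tfrac{5n}{2}-1$ forcing $n\geq 13$. Your added remarks on checking ${\rm girth}(G_n)\geq 5$ and on the finite verifications for $H$ are details the paper leaves implicit, but they do not alter the argument.
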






\end{document}